\newtheorem{theorem}{Theorem}[section]
\newtheorem{lemma}[theorem]{Lemma}
\theoremstyle{definition}
\newtheorem{remark}[theorem]{Remark}
\newcommand{\Q}{\mathbb Q}
\newcommand{\Z}{\mathbb Z}
\newcommand{\F}{\mathbb F}
\newcommand{\C}{\mathbb C}
\newcommand{\GL}{\text{GL}}
\newcommand{\Id}{\text{Id}}
\DeclareMathOperator{\Gal}{Gal}
\DeclareMathOperator{\Aut}{Aut}
\DeclareMathOperator{\Frob}{Frob}
\DeclareMathOperator{\SL}{SL}
\newcommand{\Oo}{\mathcal{O}}
\newcommand{\mat}[4]{\left(\begin{array}{cc}
#1 &#2\\
#3 &#4
\end{array}\right)}
\title{Wild Galois representations: elliptic curves with wild cyclic reduction}
\author{Nirvana Coppola}
\address{Dipartimento di Matematica, Universit\`a di Padova, Via Trieste 63, 35131 Padova, Italy}
\email{nirvana.coppola@unipd.it}
\begin{document}

\begin{abstract}
In 1990, Kraus \cite{Kraus1990SurAdditive} classified all possible inertia images of the $\ell$-adic Galois representation attached to an elliptic curve over a non-archimedean local field. In \cite{Coppola2020WildField, Coppola2020WildAction}, the author computed explicitly the Galois representation of elliptic curves having non-abelian inertia image, a phenomenon which only occurs when the residue characteristic of the field of definition is $2$ or $3$ and the curve attains good reduction over some non-abelian ramified extension. In this work, the computation of the Galois representation in all the remaining ``wild'' cases, i.e. when the residue characteristic is $p=2$ or $3$ and the curve attains good reduction over an extension whose ramification degree is divisible by $p$ (without assuming the condition on the image of inertia being non-abelian), is completed. This is based on Chapter V of the author's PhD thesis \cite{Coppola2021WildCurves}.
\end{abstract}
\maketitle

\section{Introduction}
Let $K$ be a non-archimedean local field of mixed characteristic, or in other words, up to isomorphism, let $K$ be a finite extension of $\Q_p$, for some rational prime $p$.
Let $E$ be an elliptic curve defined over $K$. For any prime $\ell$, one can consider the $\ell$-adic Tate module $T_\ell(E)$ and the action of the absolute Galois group $\Gal(\overline{K}/K)$ of $K$ on it, which defines a representation denoted by $\rho_{E,\ell}$. It is well-known that $\rho_{E,\ell}$, more precisely its restriction to the absolute inertia subgroup $I_K$ of $\Gal(\overline{K}/K)$, encodes geometric and arithmetic properties of $E$: indeed the image $I=\rho_{E,\ell}(I_K)$ of inertia is
\begin{itemize}
    \item trivial if and only if $E$ has good reduction over $K$, by the Criterion of Néron-Ogg-Shafarevich \cite[Theorem 1]{Serre1968GoodVarieties};
    \item finite if and only if $E$ has potentially good reduction over $K$, as a consequence of the Criterion \cite[Theorem 2]{Serre1968GoodVarieties};
    \item infinite if and only if $E$ has potentially multiplicative reduction.
\end{itemize}

From an arithmetic point of view, the restriction to inertia of $\rho_{E,\ell}$ contains enough information to determine the conductor exponent at $p$ of $E$, a nonnegative integer number which is nonzero if and only if the curve does not have good reduction, and can be defined for more general abelian varieties. Its computation is of interest, to name one example, for the application of the modular method. Indeed if an elliptic curve is modular (e.g. if it is defined over $\Q$, by \cite{Wiles1995ModularTheorem}), then its conductor is equal to the level of the newform to which it is associated.

While the Galois representation in the cases of good and potentially multiplicative reduction were well understood (respectively by point-counting over the reduction of $E$ \cite[IV, \S 1.3]{Serre1997AbelianCurves} and by using the theory of Tate curves \cite[V \S 5]{Silverman1994AdvancedCurves}) prior to the author's PhD work, less was known about the potentially good case. For a survey of all the known cases, see also \cite[\S 2]{Coppola2021WildCurves}.

If $E$ has potentially good reduction, Kraus \cite{Kraus1990SurAdditive} classifies all possible inertia images $I$ in terms of invariants associated to $E$. The possibilities for the image of inertia can be summed up in the following finite list, where the notation is consistent with \cite{DokchitserGroupNames}:
\begin{itemize}
    \item one of the following cyclic groups: $C_2,C_3,C_4,C_6$ (no restrictions on $p$);
    \item $C_3 \rtimes C_4$ (the dicyclic group), only for $p=3$;
    \item $Q_8$ (the quaternions) or $\SL_2(\F_3)$, only for $p=2$.
\end{itemize}

Moreover, by \cite[Theorem 2.10]{Coppola2021WildCurves} or \cite[Corollary 2 to Theorem 2]{Serre1968GoodVarieties}, if $K^{nr}$ is the maximal unramfied extension of $K$, then $E$ acquires good reduction over the minimal field $K^{nr}(E[m])$ where all $m$-torsion of $E$ is defined, for $m\geq 3$ coprime to $p$. One thus has an explicit recipe to determine this extension, and thus the inertia image. Indeed, we have $I \cong \Gal(K^{nr}(E[m])/K^{nr})$, see e.g. \cite[\S 2]{Coppola2020WildField}.

If $E$ has tame reduction, i.e. if it attains good reduction over a tamely ramified extension, the work of Dokchitser and Dokchitser \cite{Dokchitser2016EulerRepresentations} is enough to recover the full Galois representation. A survey of this is presented in \cite{Coppola2021WildCurves}, in a language that is compatible to that used in the present paper.

We say that $E$ has wild reduction if it has potentially good reduction, which it attains over a wildly ramified extension. It follows that $E$ has wild reduction if and only if $p$ divides the order of $I$. The three non-abelian cases $C_3 \rtimes C_4$, $Q_8$ and $\SL_2(\F_3)$, in particular, only occur as wild reduction cases. The large inertia image poses constraints on the whole $\rho_{E,\ell}$, making it relatively simple to determine it. This has been done by the author in \cite{Coppola2020WildField, Coppola2020WildAction}. In this paper, the remaining wild cases are tackled. These are all cyclic, and unexpectedly more subtle than the non-abelian ones. By the list above, we know that if $E$ has wild cyclic reduction and $p=3$, then $I \cong C_3$ or $C_6$, and if $p=2$, then $I \cong C_2, C_4$ or $C_6$. We will reduce these five possibilities to the study of the following two:
\begin{itemize}
    \item $p=3$ and $I \cong C_3$,
    \item $p=2$ and $I \cong C_4$,
\end{itemize}
and give an explicit expression for $\rho_{E,\ell}$. In particular, in all these cases, there exists an explicit finite extension of $K$ over which not only does $E$ attain good reduction, but it is possible to choose, as good models:
\begin{itemize}
    \item $y^2=x^3-x$, when $p=3$, and
    \item $y^2+y=x^3$, when $p=2$.
\end{itemize}

Using this and representation theory of finite groups, the representations are explicitly described in Theorem \ref{thm:wild_cyclic_p3}, for $p=3$, and in Theorem \ref{thm:p2cycinertia}, for $p=2$.

The results presented in this paper are all included in Chapter V of the author's PhD thesis \cite{Coppola2021WildCurves}.

We expect in the foreseeable future to implement this result in MAGMA, to complete the already existing functions ``GaloisRepresentations'' \cite{Coppola2021NirvanaC93/Wild-Galois-Representations-Magma} containing the computation of $\rho_{E,\ell}$ in the non-abelian inertia cases, as in \cite{Coppola2020WildField,Coppola2020WildAction}.

As a final note, we mention the related work of \cite{Dembele2024OnQl}, which aims at computing the so-called ``inertial types'' for elliptic curves defined over $\Q_p$. Although their approach and language is different from the one presented here, the motivation is the same.

\section{Notation}

Throughout the paper, we will use the following notation.
\vspace{8pt}
\begin{center}
\begin{tabular}{r|l}
\hline
    $p$ & a rational prime \\
    $K$ & a $p$-adic field \\
    $v$ & a normalised valuation on $K$ \\
    $\pi$ & a uniformiser of $K$ \\
    $k$ & the residue field of $K$ \\
    $\overline{K}$ & a fixed algebraic closure of $K$ \\
    $K^{nr}$ & the maximal unramified extension of $K$ in $\overline{K}$ \\
    $E$ & an elliptic curve defined over $K$ \\
    $\Delta$ & the discriminant of a given model of $E$ \\
    $E[m]$ & the geometric $m$-torsion of $E$, for any integer $m$ \\
    $\ell$ & a rational prime different from $p$ \\
    $T_\ell (E)$ & the $\ell$-adic Tate module of $E$ \\
    $\rho_{E,\ell}$ & the $\ell$-adic Galois representation of $\Gal(\overline{K}/K)$ on $T_\ell(E)$\\
    $I_K$ & the inertia subgroup of $\Gal(\overline{K}/K)$ \\
    $I$ & the image of inertia $\rho_{E,\ell}(I_K)$\\
\hline
\end{tabular}
\end{center}
\vspace{8pt}

Moreover, we fix an isomorphism $\Aut(T_\ell(E)) \cong \GL_2(\Z_\ell)$ and, if $\overline{\Q}_\ell$ is an algebraic closure of $\Q_\ell$, we fix embeddings $\Q_\ell \hookrightarrow \overline{\Q}_\ell \hookrightarrow \C$. We still denote by $\rho_{E,\ell}$ the following complex representation:
\[
\rho_{E,\ell} : \Gal(\overline{K}/K) \rightarrow \Aut(T_\ell(E)) \cong \GL_2(\Z_\ell) \hookrightarrow \GL_2(\Q_\ell) \hookrightarrow \GL_2(\C).
\]

\section{The case \texorpdfstring{$p=3$}{p=3}}

We assume for this section that $p=3$. Then, by \cite[Theorem 2.7]{Coppola2020WildField} and \cite[Théorème 1]{Kraus1990SurAdditive}, we have that $E$ has wild cyclic reduction exactly when $v(\Delta)$ is even and $E$ has N\'eron type different from $I_0^*$. By \cite[Lemma 2.6]{Coppola2020WildField} and \cite[Corollaire du Lemme 3]{Kraus1990SurAdditive}, this happens if and only if $v(\Delta)$ is even and $E$ has no $2$-torsion points defined over $K^{nr}$. Fix a Weierstrass equation for $E$, of the form $y^2=f(x)$, and let $\alpha_1,\alpha_2,\alpha_3$ be the roots of $f$ in $\overline{K}$. Let $F=K(\alpha_1)$. Then the Galois closure of $F/K$ is $K(E[2])$.

If $v(\Delta) \equiv 0 \pmod 4$, we fix $\Frob_K$ to be the Frobenius element of $\Gal(\overline{K}/K)$ that fixes $F$ point-wise. If $v(\Delta) \equiv 2 \pmod 4$ and $E$ has type different from $I_0^*$, we fix $\Frob_K$ to be the Frobenius element of $\Gal(\overline K/K)$ that fixes $F$ point-wise and a square root $\sqrt{\pi}$ of the uniformiser of $K$.

We will prove the following result.

\begin{theorem}\label{thm:wild_cyclic_p3}
Let $K$ be a $3$-adic field and let $E/K$ have wild cyclic reduction. Let $\chi$ be the unramified character of $\Gal(\overline K/K)$ that sends $\Frob_K$ to $\sqrt{-|k|}$ (which we identify with $i \sqrt{|k|} \in \C$). 
\begin{enumerate}[(a)]
    \item If $v(\Delta) \equiv 0 \pmod 4$ and $[K(E[2]):K]=6$, then $\rho_{E,\ell} = \chi \otimes \psi$, where $\psi$ is the unique irreducible $2$-dimensional representation of $S_3$.
    \item If $v(\Delta) \equiv 0 \pmod 4$ and $[K(E[2]):K]=3$, then:
    \begin{enumerate}[(i)]
        \item if there are $\alpha_i$ and $\alpha_j$ such that $\alpha_i-\alpha_j$ is a square in $K(E[2])$, let $\sigma \in I_K$ be the element of order $3$ that acts on the roots of $f$ as $\sigma(\alpha_j)=\alpha_i$, and let 
        \begin{align*}
  \begin{array}{llll}
       \psi: & I_K &\rightarrow& \overline \Q_\ell^\times  \\
       & \sigma &\mapsto& \dfrac{-1-\sqrt{-3}}{2};
  \end{array}
        \end{align*}
    then 
    \[\rho_{E,\ell}=\chi \otimes \psi \oplus \overline{\chi} \otimes \overline{\psi},\] where $\overline{\bullet}$ denotes complex conjugation;
    \item otherwise, let $\sigma \in I_K$ permute cyclically the roots of $f$, with $\sigma(\alpha_1)=\alpha_2$, and let $\psi$ be defined analogously as in case (b.i). Then 
    \[\rho_{E,\ell}=\overline{\chi} \otimes \psi \oplus \chi \otimes \overline{\psi}.\]
    \end{enumerate}
    \item If $v(\Delta) \equiv 2 \pmod 4$ and $E$ has N\'eron type different from $I_0^*$, let $\chi_\pi$ be the quadratic character of $K(\sqrt{\pi})/K$. Then $E_\pi$, the quadratic twist of $E$ by $K(\sqrt{\pi})$, has wild cyclic reduction and satisfies $v(\Delta) \equiv 0 \pmod 4$. We have 
    \[\rho_{E,\ell}=\rho_{E_\pi,\ell} \otimes \chi_\pi.\]
\end{enumerate}
\end{theorem}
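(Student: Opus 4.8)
The plan is to dispose of part (c) first and then treat (a) and (b) by a single twisting argument. For (c), since $E_\pi$ is the quadratic twist by $K(\sqrt\pi)$, one checks directly from a Weierstrass model that twisting by $\pi$ multiplies the discriminant by $\pi^6$, so $v(\Delta_{E_\pi})=v(\Delta)+6\equiv 0\pmod 4$, and that $E_\pi$ still has no $2$-torsion over $K^{nr}$; hence $E_\pi$ has wild cyclic reduction of the type treated in (a)/(b). The identity $\rho_{E,\ell}=\rho_{E_\pi,\ell}\otimes\chi_\pi$ is then the standard compatibility of $\ell$-adic representations with quadratic twists. This reduces everything to $v(\Delta)\equiv 0\pmod 4$.

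For (a) and (b) the key input is the stated fact that $E$ acquires good reduction over an explicit extension $L$ on which it becomes isomorphic to $E_0\colon y^2=x^3-x$. Since $E_0$ already has good reduction over $K$ (its discriminant $64$ is a unit at $3$) and this reduction is supersingular, I would first describe $\rho_{E,\ell}|_{I_K}$. By the Néron--Ogg--Shafarevich theory, $I_K$ acts on $T_\ell(E)$ through the finite quotient $I\cong C_3$, realised inside the automorphism group of the reduction $\tilde E_0$, which for a supersingular curve in characteristic $3$ does contain an element of order $3$ even though the geometric automorphism group of $E_0$ in characteristic $0$ is only $C_4$. Diagonalising this order-$3$ element $\sigma$ over $\C$ gives $V:=T_\ell(E)\otimes\C=V_+\oplus V_-$ with eigenvalues primitive cube roots of unity; since $\det\rho_{E,\ell}$ is the unramified cyclotomic character, the two eigenvalues are inverse to each other, and matching the normalisation $\sigma(\alpha_j)=\alpha_i$ fixes one line to have eigenvalue $\psi(\sigma)=\tfrac{-1-\sqrt{-3}}{2}$, exactly as stated, so that $\rho_{E,\ell}|_{I_K}=\psi\oplus\bar\psi$.

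Next I would pin down the Frobenius action. As $\det\rho_{E,\ell}(\Frob_K)=|k|$ and the reduction is supersingular, the Weil bound forces the eigenvalues of $\Frob_K$ on $V$ to be purely imaginary of absolute value $\sqrt{|k|}$, i.e.\ $\pm\sqrt{-|k|}$, which are precisely $\chi(\Frob_K)$ and $\bar\chi(\Frob_K)$. The split between (a) and (b) is then whether $\Frob_K$ preserves or interchanges the eigenlines $V_\pm$: in case (a) the image of Galois in $\Aut(E[2])$ is all of $S_3$, so $\Frob_K$ maps to a transposition, conjugates $\sigma$ to $\sigma^{-1}$ and swaps $V_+\leftrightarrow V_-$; this makes $\rho_{E,\ell}$ irreducible, and after extracting the common unramified scalar $\chi$ it is identified with $\chi\otimes\psi$, where $\psi$ is now the $2$-dimensional irreducible of $S_3$ (whose restriction to $C_3$ recovers the two characters above). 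In case (b) the image is only $C_3$, $\Frob_K$ fixes each $V_\pm$, and $\rho_{E,\ell}$ splits as $\chi\otimes\psi\oplus\bar\chi\otimes\bar\psi$.

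The delicate point, which I expect to be the main obstacle, is the labelling in case (b): deciding which eigenline carries $\chi$ and which carries $\bar\chi$, i.e.\ distinguishing (b.i) from (b.ii). This is a genuine sign that cannot be read off from the determinant or the Weil bound alone, and is exactly the subtlety flagged in the introduction. I would resolve it by tracking how the chosen $\Frob_K$ interacts with the CM normalisation of $E_0$: the square class of a difference $\alpha_i-\alpha_j$ in $K(E[2])$, which is essentially the square class of $\sqrt{\Delta}$, governs the sign of $\Frob_K$ on the $\sigma$-eigenlines and hence determines whether $(\chi,\bar\chi)$ or $(\bar\chi,\chi)$ is attached to $(\psi,\bar\psi)$. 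A secondary technical check is that the unramified twist is exactly $\chi$, with $\sqrt{-|k|}=i\sqrt{|k|}$, rather than its conjugate; this requires comparing the supersingular Frobenius eigenvalues of $E_0$ over $k$ with the fixed embeddings $\overline{\Q}_\ell\hookrightarrow\C$, and it is in this bookkeeping that most of the care is needed.
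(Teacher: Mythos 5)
Your overall strategy coincides with the paper's: reduce (c) to (a)/(b) by the quadratic twist by $K(\sqrt{\pi})$ (discriminant scaled by $\pi^6$), obtain the good model $y^2=x^3-x$ over an explicit extension, diagonalise the order-$3$ inertia element, and pair the cube-root-of-unity eigenvalues with the Frobenius eigenvalues $\pm\sqrt{-|k|}$. However, the one point you correctly identify as ``the main obstacle'' --- deciding whether $(\chi,\bar\chi)$ or $(\bar\chi,\chi)$ is attached to $(\psi,\bar\psi)$, i.e.\ the difference between (b.i) and (b.ii) --- is not actually proved: ``I would resolve it by tracking how the chosen $\Frob_K$ interacts with the CM normalisation of $E_0$'' is a statement of intent, not an argument, and the assertion that the square class of $\alpha_i-\alpha_j$ ``governs the sign'' is precisely the content of the theorem, so it cannot be assumed. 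The paper settles this in two concrete steps that are absent from your proposal: in case (b.i) the change of variables producing the good model uses a square root $\sqrt{\alpha_2-\alpha_1}\in F=K(E[2])$, and the pairing of eigenvalues is then pinned down by the explicit matrix computation of \cite[\S 3.3]{Coppola2020WildField}; in case (b.ii) one writes $\alpha_2-\alpha_1=\pi_F^{2a}\epsilon$ with $\epsilon\in\Oo_K^\times$ a non-square, passes to the \emph{unramified} quadratic twist $E_\epsilon$, which lands in case (b.i), and observes that twisting by the unramified quadratic character exchanges $\chi$ and $\bar\chi$ while fixing $\psi$. Without some such mechanism your proof determines $\rho_{E,\ell}$ only up to the ambiguity $\chi\otimes\psi\oplus\bar\chi\otimes\bar\psi$ versus $\bar\chi\otimes\psi\oplus\chi\otimes\bar\psi$.

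A secondary error: the claim that supersingularity plus the Weil bound forces the Frobenius eigenvalues to be purely imaginary of absolute value $\sqrt{|k|}$ is false. Supersingularity only gives $3\mid a$, and for $|k|=3^n$ with $n$ even the Frobenius eigenvalues of $y^2=x^3-x$ are both equal to the real number $(-3)^{n/2}$ (e.g.\ trace $-6$ over $\F_9$), not $\pm\sqrt{-|k|}$. The paper instead computes the eigenvalues by an explicit point count (for $n$ odd) and treats $n$ even in a separate remark, where the stated formula survives only because $\chi=\bar\chi$ there. Your argument as written would need this case distinction, and the eigenvalue computation needs an actual count rather than an appeal to the Weil bound.
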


In the rest of this section, we prove each case separately.

\begin{proof}[Proof of case (a)]
First of all, $E$ acquires good reduction over $F$, since $F/K$ is totally ramified of degree $3$ and, by \cite[Theorem 2.7]{Coppola2020WildField} we know that $I=\rho_{E,\ell}(I_K) \cong C_3$. Then, as $[K(E[2]):K]=6$, we have that $K(E[2])/K$ is quadratic and unramified. The same proof as the one for \cite[Theorem 2.12]{Coppola2021WildCurves}, case \textit{(b.ii)}, shows that $\rho_{E,\ell}(\Frob_K)$ has eigenvalues $\pm \sqrt{-|k|}$. Let $\chi$ be as in the statement. Then $\psi = \rho_{E,\ell} \otimes \chi^{-1}$ factors through $\Gal(K(E[2])/K)$, which is isomorphic to $S_3$. Since $\psi\big|_{I_K}$ acts faithfully with image isomorphic to $C_3$, by direct inspection of the $2$-dimensional representations of $S_3$ we deduce that $\psi$ is irreducible, so (as a representation of $S_3$) it is the unique $2$-dimensional irreducible (and faithful) representation of $S_3$.
\end{proof}

For the proof of case \textit{(b.i)}, we assume first that $n=[k:\F_3]$ is odd. At the end of the proof, we will highlight what changes if $n$ is even.

\begin{proof}[Proof of case (b.i).]
Assume that $n=[k:\F_3]$ is odd. Up to relabeling, suppose that $\alpha_2-\alpha_1$ is a square in $F=K(E[2])$, and fix a square root $\sqrt{\alpha_2-\alpha_1} \in F$. The following change of variables
\begin{align*}
    \left\lbrace
    \begin{array}{lll}
        x &=& (\alpha_2-\alpha_1)x'+\alpha_1, \\
        y &=& \sqrt{(\alpha_2-\alpha_1)^3} y'
    \end{array}
\right.
\end{align*}
is defined over $F$ and gives a model for the base change of $E$ to $F$ that reduces to $\tilde{E}: y^2=x^3-x$ over $k$ (for the proof, see \cite[Lemma 3.4]{Coppola2020WildField}). Therefore, by \cite[Example 2.5]{Coppola2020WildField}, the eigenvalues of $\rho_{E,\ell}(\Frob_K)$ are $\pm \sqrt{-|k|}$. We fix $\sigma \in I_K$ that permutes cyclically $\alpha_1,\alpha_2,\alpha_3$ and satisfies $\sigma(\alpha_1)=\alpha_2$, as in the statement. Then $\rho_{E,\ell}(\sigma)$ has determinant $1$ (by the properties of the Weil pairing) and order $3$, so it has eigenvalues given by the two primitive third roots of unity.

The image of $\rho_{E,\ell}$ is abelian, since it is isomorphic to $\Gal(F^{nr}/K)$, which is the direct product of $\Gal(F/K)$ and $\Gal(K^{nr}/K)$. Therefore, we have a splitting of the form $\rho_{E,\ell}=\rho_1 \oplus \rho_2$, where the $\rho_i$'s are $1$-dimensional. From the above, we know that, up to relabeling, $\rho_1(\Frob_K)=\chi(\Frob_K)=\sqrt{-|k|}$, $\rho_2(\Frob_K)=\overline{\chi}(\Frob_K)=-\sqrt{-|k|}$ and $\rho_1(\sigma)$, $\rho_2(\sigma)$ are the two primitive third roots of unity, and it is only necessary to distinguish between the two.

The same argument as in the proof of \cite[Theorem 1.1]{Coppola2020WildField} (more precisely in Section 3.3) can be applied here, and it shows that $\rho_1(\sigma)=\dfrac{-1-\sqrt{-3}}{2}$, and $\rho_2(\sigma)=\dfrac{-1+\sqrt{-3}}{2}$. So, if we define $\psi : I_K \rightarrow \overline{\Q}_\ell^\times$ such that $\psi(\sigma)=\dfrac{-1-\sqrt{-3}}{2}$, we have
\begin{align*}
    \rho_{E,\ell} = \chi \otimes \psi \oplus \overline{\chi} \otimes \overline{\psi},
\end{align*}
as claimed.
\end{proof}

\begin{remark}
If $n=[k:\F_3]$ is even, we still have the same good model for $E$ over $F$, and we can compute the eigenvalues of Frobenius as in Example \cite[Example 2.5]{Coppola2020WildField}, obtaining two identical real values, namely $(-3)^{n/2}$. In this case, we simply have $\rho_{E,\ell}=\chi \otimes (\psi \oplus \overline \psi)$, for $\psi$ defined as in the statement. Since $\chi=\overline{\chi}$, we still recover $\rho_{E,\ell}=\chi \otimes \psi \oplus \overline{\chi} \otimes \overline{\psi}$.
\end{remark}

\begin{proof}[Proof of case (b.ii).]
If all the $\alpha_i-\alpha_j$'s are not squares in $F = K(E[2])$, then $F(\sqrt{\alpha_2-\alpha_1})$ is quadratic and unramified over $F$. In fact, by assumption, $v(\Delta) \equiv 0 \pmod 4$, and if $v_F$ is the normalised valuation on $F$, then $v(\Delta)=v_F(\Delta)/3=2v_F(\alpha_2-\alpha_1)$, since $E$ has potentially good reduction. Therefore, $v_F(\alpha_2-\alpha_1)$ is even, so we can write $\alpha_2-\alpha_1=\pi_F^{2a}\epsilon$, where $\pi_F$ is a uniformiser of $F$, $a \in \Z$ and $\epsilon \in \Oo_F^\times$ is not a square. Moreover, we can take $\pi_F$ and $\epsilon$ so that $\epsilon \in \Oo_K^\times$. Let $E_\epsilon$ be the twist of $E$ be $K(\sqrt{\epsilon})$. Then $E_\epsilon$ is as in case \textit{(b.i)} of the theorem. In fact, since $K(\sqrt{\epsilon})/K$ is unramified, $E_\epsilon$ also has wild cyclic reduction over $K$ with image of inertia $C_3$, and an equation for $E_\epsilon$ is precisely
\begin{align*}
    y^2=(x-\epsilon \alpha_1)(x -\epsilon \alpha_2) (x-\epsilon \alpha_3),
\end{align*}
so $\epsilon\alpha_2-\epsilon\alpha_1 =\epsilon^2 \pi_F^{2a}$ is a square in $F$.

By case \textit{(b.i)}, we have $\rho_{E_\epsilon,\ell} =\chi \otimes \psi \oplus \overline{\chi} \otimes \overline{\psi}$, where $\chi$ and $\psi$ are as in the statement. Let $\eta: \Gal(\overline K/K) \rightarrow \{\pm1\}$ be the unramified quadratic character of $\Gal(\overline K/K)$. Then $\rho_{E_\epsilon,\ell} = \rho_{E,\ell} \otimes \eta$, and an immediate computation shows that
\begin{align*}
    \rho_{E,\ell}= \overline{\chi} \otimes \psi \oplus \chi \otimes \overline{\psi},
\end{align*}
as claimed.
\end{proof}

\begin{proof}[Proof of case (c).]
In this case, by \cite[Theorem 2.7]{Coppola2020WildField}, we have $I=\rho_{E,\ell}(I_K) \cong C_6$. Let $E_\pi$ be the twist of $E$ by $K(\sqrt{\pi})$. Then, the discriminant of $E_\pi$ is equal to $\pi^6 \Delta$, and $v(\pi^6 \Delta)=6+v(\Delta) \equiv 0 \pmod 4$. Therefore, if $\chi_\pi$ is the quadratic character of $\Gal(K(\sqrt{\pi})/K)$, we have $\rho_{E,\ell}=\rho_{E_\pi,\ell} \otimes \chi_\pi$, where $\rho_{E_\pi,\ell}$ is given by one of cases \textit{(a)}, \textit{(b.i)} or \textit{(b.ii)}.
\end{proof}

\section{The case \texorpdfstring{$p=2$}{p=2}}

We assume for this section that $p=2$. Recall that $E/K$ has wild cyclic reduction exactly when the image of inertia $I=\rho_{E,\ell}(I_K)$ is one of $C_2$, $C_4$ or $C_6$, and \cite[Theorem 2.9]{Coppola2021WildCurves}  and \cite[Théorème 2, 3]{Kraus1990SurAdditive} classify these three cases.

We first consider the problem of the restriction to inertia of $\rho_{E,\ell}$ for $I \cong C_2$ or $C_6$. In order to do so, we start by viewing $E$ as an elliptic curve over $K^{nr}$, since the reduction type and the action of inertia do not change. In particular, we have $\rho_{E,\ell}: \Gal(\overline{K}/K^{nr}) \rightarrow \Aut (T_\ell(E))$.
\begin{lemma}\label{lem:p2C2_6_inertia}
Let $E/K^{nr}$ be an elliptic curve with wild cyclic reduction and $I \not\cong C_4$. Then:
\begin{itemize}
    \item if $I \cong C_2$, then $E$ is a quadratic ramified twist of an elliptic curve with good reduction;
    \item if $I \cong C_6$, then $E$ is a quadratic ramified twist of an elliptic curve with tame potentially good reduction.
\end{itemize}
\end{lemma}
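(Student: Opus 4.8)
The plan is to analyze the two cases by exploiting the structure of the inertia image together with the fact that, over $2$-adic fields, wild ramification in these cyclic cases is controlled by quadratic twisting. The key observation is that $C_2$ and $C_6$ both contain a subgroup of index equal to a power of $2$ that is tame (trivial in the $C_2$ case, and $C_3$ in the $C_6$ case), and the obstruction to descending to a curve with good or tame reduction is measured by a quadratic ramified character. So I would first reduce everything to producing an explicit quadratic ramified twist that kills the wild part of the inertia action.

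For $I \cong C_2$, I would argue as follows. Working over $K^{nr}$, the curve has potentially good reduction and $\rho_{E,\ell}(I_{K^{nr}}) \cong C_2$, which is generated by $-\Id$ since the only order-$2$ element of $\GL_2(\Z_\ell)$ compatible with the determinant being a power of the cyclotomic character (here trivial on inertia, as $\ell \neq p$) is $-\Id$. The quadratic character $\eta$ cutting out the field of definition of the good reduction, namely the degree-$2$ extension $L/K^{nr}$ over which $E$ acquires good reduction, is ramified precisely because $p=2$ forces this extension to be wildly (hence ramified) of degree $2$. Twisting $E$ by $\eta$ gives a curve $E_\eta$ over $K^{nr}$ whose inertia image is now trivial, i.e.\ has good reduction by N\'eron--Ogg--Shafarevich. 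Thus $E = (E_\eta)_\eta$ exhibits $E$ as the quadratic ramified twist of $E_\eta$, which has good reduction. The main point to verify is that $\eta$ is genuinely ramified, which follows because an unramified quadratic twist cannot change the reduction type, and $E$ itself does not have good reduction over $K^{nr}$ (its inertia image is nontrivial).

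For $I \cong C_6$, the plan is entirely parallel but with the tame part $C_3$ surviving the twist. Since $C_6 \cong C_2 \times C_3$ and $p=2$, the unique subgroup $C_3$ is the tame (prime-to-$p$) part of the inertia image, while the $C_2$ quotient is the wild part. I would again identify the quadratic ramified character $\eta$ corresponding to the order-$2$ piece: concretely, $\rho_{E,\ell}(I_{K^{nr}})$ has a unique element of order $2$, which is $-\Id$, and factoring out this involution corresponds to twisting by a ramified quadratic character. Twisting $E$ by $\eta$ produces $E_\eta$ whose inertia image is now $C_3$, i.e.\ $E_\eta$ has tame potentially good reduction. Again $E$ is recovered as the quadratic ramified twist of $E_\eta$. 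The subtlety here is to confirm that the twist isolates exactly the $C_2$ factor and leaves the $C_3$ action intact; this is automatic because quadratic twisting multiplies $\rho_{E,\ell}$ by the scalar character $\eta$, and on the order-$3$ generator $\eta$ is trivial (as $\eta$ has order $2$), so the tame action is unchanged while the $-\Id$ coming from the wild involution is cancelled.

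The main obstacle I anticipate is establishing cleanly that the relevant quadratic character is ramified rather than unramified, i.e.\ that the wild part of the inertia really does correspond to a ramified quadratic extension and not merely to an unramified one that happens to contribute to the inertia image. This is where the hypothesis $p=2$ (and the classification of Kraus cited in the excerpt) is essential: it is precisely because $2 \mid |I|$ that the extension over which good reduction is attained is wildly ramified, forcing $\eta$ to be ramified. Once this is pinned down, the remaining steps are formal consequences of the behaviour of quadratic twists on Galois representations and of the Criterion of N\'eron--Ogg--Shafarevich.
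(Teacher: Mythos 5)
Your proof is correct and follows essentially the same route as the paper: identify $L=K^{nr}(E[3])$, use that $\Gal(L/K^{nr})\cong I$, and twist by the quadratic extension $L/K^{nr}$ (resp.\ its unique quadratic subextension $M/K^{nr}$ when $I\cong C_6$), your only real addition being the explicit observation that the order-$2$ element of the inertia image is $-\Id$. The ``main obstacle'' you anticipate is moot in this lemma: since $K^{nr}$ has no nontrivial unramified extensions, every quadratic extension of $K^{nr}$ is automatically totally ramified, so no wildness argument is needed to conclude that the twisting character is ramified.
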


The proof is essentially \cite[Proposition 4.3]{Dembele2024OnQl}.

\begin{proof}
Assume first that $I \cong C_2$.

Let $L=K^{nr}(E[3])$. Then, by \cite[Theorem 2.8]{Coppola2020WildField} (or \cite[\S 2 Corollary 3]{Serre1968GoodVarieties}), we have $I\cong \Gal(L/K^{nr})$, so $L/K^{nr}$ is quadratic. Then the quadratic twist $E'$ of $E$ by $L$ has good reduction.

Assume now that $I \cong C_6$ and let $L$ be as above. Then $L/K^{nr}$ is cyclic of order $6$ and there is a unique quadratic subextension $M/K^{nr}$. The quadratic twist of $E$ by $M$ has tame potentially good reduction (achieved over $L$), with inertia image that is cyclic of order $3$.
\end{proof}

We now show that, in fact, there exists a quadratic twist of the base field $K$, over which $E$ acquires good or tame reduction.

\begin{lemma}\label{lem:p2C2_6}
Let $E/K$ be an elliptic curve with wild cyclic reduction and $I \not\cong C_4$. Then:
\begin{itemize}
    \item if $I \cong C_2$ then $E$ is a quadratic ramified twist of an elliptic curve with good reduction;
    \item if $I \cong C_6$ then $E$ is a quadratic ramified twist of an elliptic curve with tame potentially good reduction.
\end{itemize}
\end{lemma}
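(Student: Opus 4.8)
The plan is to descend the statement of Lemma \ref{lem:p2C2_6_inertia} from $K^{nr}$ to $K$. That lemma already produces, over $K^{nr}$, a quadratic ramified twist of $E$ with good (resp.\ tame) reduction; the only issue is that the twisting quadratic extension and the resulting good model are a priori defined over $K^{nr}$ rather than over $K$. So the whole problem reduces to showing that the relevant data can be chosen to be defined over $K$ itself. Concretely, in the $I\cong C_2$ case the twist is by $L=K^{nr}(E[3])$, a quadratic ramified extension of $K^{nr}$; I want to replace $L$ by a quadratic ramified extension of $K$ that induces the same extension after base change to $K^{nr}$.

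First I would recall that, since the residue field is perfect of characteristic $2$, every quadratic ramified extension of $K^{nr}$ is obtained by base change from a quadratic ramified extension of $K$. This is the arithmetic heart of the argument: the quadratic ramified extensions of a $2$-adic field are controlled by $K^\times/(K^\times)^2$, and the unramified direction contributes nothing ramified, so any ramified quadratic extension of $K^{nr}$ already has a model over $K$. I would make this precise by noting that $E$ is actually defined over $K$, hence so is the field $K(E[3])$, and the extension $K^{nr}(E[3])/K^{nr}$ is the compositum of $K(E[3])$ with $K^{nr}$. Since $E[3]$ is defined over $K$, the image of inertia $I$ equals $\Gal(K^{nr}(E[3])/K^{nr})$ by the criterion already cited, and the $\Gal(\overline K/K)$-action is what matters.

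Next I would carry out the descent of the twist. In the $C_2$ case, let $L_0/K$ be a quadratic ramified extension with $L_0\cdot K^{nr}=L$; such an $L_0$ exists by the previous paragraph. Let $E'$ be the quadratic twist of $E$ by $L_0$, a curve defined over $K$. Over $K^{nr}$ the twist $E'$ agrees with the twist of Lemma \ref{lem:p2C2_6_inertia}, hence $E'$ has trivial inertia image over $K^{nr}$, i.e.\ potentially good reduction that is in fact good over $K^{nr}$; by N\'eron--Ogg--Shafarevich this means $E'$ has good reduction over $K$. Thus $E$ is a quadratic ramified twist over $K$ of a curve with good reduction, as required. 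For the $I\cong C_6$ case I would argue identically, choosing $L_0/K$ quadratic ramified with $L_0\cdot K^{nr}=M$ (the unique quadratic subextension from Lemma \ref{lem:p2C2_6_inertia}); the twist $E'$ then has inertia image $C_3$ over both $K^{nr}$ and $K$, i.e.\ tame potentially good reduction, and is defined over $K$.

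The main obstacle I expect is precisely the descent of the quadratic extension: I must ensure that the quadratic ramified twist from the $K^{nr}$-level lemma really does come from a quadratic ramified extension of $K$, and not merely from some quadratic extension of $K^{nr}$ that fails to descend. I would resolve this by exploiting that $E$ and its $3$-torsion field are already defined over $K$, so the relevant quadratic extension is $K(E[3])$ (or its quadratic subfield in the $C_6$ case) intersected appropriately with the ramified part; the subtlety is only to check that this $K$-rational extension is ramified (not unramified), which follows because $L/K^{nr}$ is ramified by construction and base change to $K^{nr}$ cannot turn a ramified extension unramified. Once the ramified quadratic extension of $K$ is in hand, the rest is a formal comparison of twists over $K$ versus over $K^{nr}$, together with the criterion of N\'eron--Ogg--Shafarevich applied over $K$.
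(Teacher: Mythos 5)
Your overall plan --- descending the twist of Lemma \ref{lem:p2C2_6_inertia} from $K^{nr}$ to $K$ --- is the same as the paper's, but the step you call the ``arithmetic heart'' is false as stated, and it is exactly the step where all the content lies. It is not true that every quadratic ramified extension of $K^{nr}$ comes by base change from one of $K$: since the residue field of $K^{nr}$ is algebraically closed of characteristic $2$, one checks (e.g.\ for $K=\Q_2$) that $U_1^2=U_2$, so $U_1/U_1^2\cong\bar{\F}_2$ via $1+2a\mapsto\bar a$, and hence $(K^{nr})^\times/((K^{nr})^\times)^2$ is \emph{infinite}; only the finitely many classes in the image of $K^\times$ descend. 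Concretely, $\Q_2^{nr}(\sqrt{1+2\zeta_3})$ is a quadratic (necessarily ramified) extension of $\Q_2^{nr}$ that is not of the form $\Q_2^{nr}(\sqrt a)$ with $a\in\Q_2^\times$. So ``the unramified direction contributes nothing ramified'' is precisely where the reasoning breaks: units of unramified extensions of $K$ generate ramified quadratic extensions of $K^{nr}$ with no model over $K$.

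Your fallback --- that the relevant extension is $K(E[3])$ ``intersected appropriately with the ramified part'' --- is not a proof and can genuinely fail as a recipe: a finite Galois extension with inertia $C_2$ and cyclic unramified quotient $C_m$ can be a non-split central extension (e.g.\ a $C_4$ over a $C_2$), in which case it contains \emph{no} quadratic ramified subextension of $K$. What actually saves the day, and what the paper proves, is that for $L=K^{nr}(E[3])$ (resp.\ its quadratic subextension $M$) the group $\Gal(L/K)$ sits in $1\to C_2\to\Gal(L/K)\to\hat{\Z}\to 1$, which splits because $\hat{\Z}$ is projective, and is a direct product because $\Aut(C_2)$ is trivial; the fixed field $F$ of a complement $\hat{\Z}$ is then a quadratic totally ramified extension of $K$ with $F\cdot K^{nr}=L$, and twisting by $F$ gives the lemma. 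Your proof needs this argument (or an equivalent one) inserted; as written it has a genuine gap.
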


\begin{proof}
Assume that $I \cong C_2$ and let $L=K^{nr}(E[3])$ as in the proof of Lemma \ref{lem:p2C2_6_inertia}. Then $\Gal(L/K)$ has inertia subgroup isomorphic to $C_2$, and the quotient is the procyclic group $\hat{\Z}$. Therefore, $\Gal(L/K)$ is a semidirect product $C_2 \rtimes \hat{\Z}$, with $C_2$ normal in $\Gal(L/K)$; but then the action of $\hat{\Z}$ can only be trivial, so in fact $\Gal(L/K)= C_2 \times \hat{\Z}$. In particular, we can consider the intermediate extension $F/K$ which is fixed by $\hat{\Z}$: this is Galois, quadratic and totally ramified, with $L/F$ unramified, therefore $E$ acquires good reduction over $F$.

If $I \cong C_6$, let $L$ and $M$ be as in the proof of Lemma \ref{lem:p2C2_6_inertia}, then $\Gal(M/K)$ is isomorphic to the direct product $C_2 \times \hat{\Z}$ as in the previous case, and again by taking $F$ to be the fixed field of $\hat{\Z}$ we conclude.
\end{proof}

Notice that Lemma \ref{lem:p2C2_6} shows the existence of a quadratic twist of $K$ over which the curve acquires good or tame reduction, but it does not give an algorithmic result to compute it. Indeed, if $p=2$, there are several quadratic ramified extensions of $K$ and we need to consider one such that its maximal unramified extension is equal to the field $L$ in the proof of Lemma \ref{lem:p2C2_6} above. Determining explicitly this extension is a non-trivial problem, which we do not tackle here. However, some partial explicit results are available if we restrict to $K=\Q_2$, namely \cite[\S 4.1, Lemma 2]{Freitas2022OnCurves}.

Assuming we have computed a quadratic twist $E'/K$ of $E$ with good or tame reduction, and if $\eta$ is the corresponding quadratic character, then $\rho_{E,\ell} = \rho_{E',\ell} \otimes \eta$, and $\rho_{E',\ell}$ is determined by \cite[Lemma 2.4]{Coppola2020WildField} (or \cite[IV, \S 1.3]{Serre1997AbelianCurves}) if $I\cong C_2$ and \cite[Theorem 2.12]{Coppola2021WildCurves} case \textit{(b)} if $I \cong C_6$.

For the rest of the section, we focus on the remaining wild cyclic case, that is $I \cong C_4$. We fix an arithmetic Frobenius element $\Frob_K$ of $\Gal(\overline{K}/K)$. We will specify which Frobenius we choose when the choice is relevant. Let $n=[k:\F_2]$ be the absolute inertia degree of $K$. We define the following unramified character of $\Gal(\overline{K}/K)$:
\begin{align*}
\begin{array}{ll}
    \chi :& \Gal(\overline K/K) \rightarrow \overline{\Q}_\ell \hookrightarrow \C\\
    & \Frob_K \mapsto (\sqrt{-2})^n \mapsto (i\sqrt{2})^n.
    \end{array}
\end{align*}

Let $G = \Gal(K(E[3])/K)$. Then $G$ is naturally embedded into $\GL_2(\F_3)$, with the embedding given by fixing a basis for $E[3]$ as a $\F_3$-vector space. We will show that $\rho_{E,\ell} \otimes \chi^{-1}$ factors through $G$, and more precisely we will prove the following result.

\begin{theorem}\label{thm:p2cycinertia}
Let $G=\Gal(K(E[3])/K)$ and suppose $I \cong C_4$. Then one of the following holds.
\begin{enumerate}[(a)]
    \item $G \cong C_4$ and $\rho_{E,\ell} = \chi \otimes (\psi \oplus \overline{\psi})$, where 
    \begin{align*}
\begin{array}{ll}
    \psi :& G \rightarrow \overline{\Q}_\ell \hookrightarrow \C\\
    & \sigma \mapsto i
    \end{array}
\end{align*}
for any fixed choice of a generator $\sigma$ of $G$;
\item $G \cong Q_8$ or $D_4$ and $\rho_{E,\ell} = \chi \otimes \psi$, where $\psi$ is the only irreducible faithful $2$-dimensional representation of $G$;
\item $G\cong C_8$ and $\rho_{E,\ell} = \chi \otimes (\psi\oplus\overline{\psi})$, where $\psi$ is the faithful character of $C_8$ that maps $g$ to the $8$-th root of unity $\dfrac{-\sqrt{2}+\sqrt{-2}}{2}$, and $g$, seen as an element of $\GL_2(\F_3)$, is $\mat{2}{2}{2}{1}$.
\end{enumerate}
\end{theorem}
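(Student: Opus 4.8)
The plan is to exploit that $3$ is coprime to $p=2$: by the criterion of N\'eron--Ogg--Shafarevich \cite{Serre1968GoodVarieties}, $E$ acquires good reduction over $K^{nr}(E[3])$, so $\rho_{E,\ell}$ is trivial on $\Gal(\overline K/K^{nr}(E[3]))$ and factors through the finite group $\Gamma=\Gal(K^{nr}(E[3])/K)$. Since $K^{nr}(E[3])=K^{nr}\cdot K(E[3])$, the inertia subgroup of $\Gal(\overline K/K(E[3]))$ maps trivially into $\Gamma$, so in fact $E$ already has good reduction over $K(E[3])$, with supersingular reduction given (after the base change of \cite{Coppola2020WildField}) by $\tilde E\colon y^2+y=x^3$. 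I would record at the outset the two pieces of data this yields: the absolute Frobenius of $\tilde E/\F_2$ has characteristic polynomial $T^2+2$, i.e.\ eigenvalues $\pm\sqrt{-2}$; and the inertia generator $\sigma$ acts through $\Aut(\tilde E)$ with $\rho_{E,\ell}(\sigma)$ of order $4$ and, by the Weil pairing, determinant $1$, hence eigenvalues $\pm i$.

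First I would determine the abstract group $G=\Gal(K(E[3])/K)$. It sits in an extension $1\to I\to G\to C_f\to 1$ with $I\cong C_4$ its normal inertia subgroup and $C_f$ the cyclic residue group, and it embeds into $\GL_2(\F_3)$ through the action on $E[3]$. The element orders occurring in $\GL_2(\F_3)$ lie in $\{1,2,3,4,6,8\}$ and its $2$-Sylow subgroup is semidihedral of order $16$; these facts force $f\le 2$ (so $|G|\in\{4,8\}$) and exclude the abelian option $C_4\times C_2$, which is not a subgroup of the semidihedral group. Finally, $\mu_3\subseteq K(E[3])$ by the Weil pairing, and $K(\mu_3)/K$ is unramified, nontrivial exactly when $n=[k:\F_2]$ is odd; hence $K(E[3])/K$ can be totally ramified (so $G\cong C_4$) only when $n$ is even. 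Combining these, $G$ must be one of $C_4$ (with $n$ even), $C_8$, $Q_8$ or $D_4$, matching the three cases of the statement.

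Next I would prove that $\rho_{E,\ell}\otimes\chi^{-1}$ is trivial on $H=\Gal(K^{nr}(E[3])/K(E[3]))$ and therefore descends to $G=\Gamma/H$. Over $K(E[3])$ the group $H$ is procyclic, generated by $\Frob_{K(E[3])}$, and acts through the Frobenius of the supersingular reduction over the residue field $\F_{2^{nf}}$. The point is that $nf$ is always even (automatically when $f=2$, and because $n$ is even in the totally ramified case), and that on the explicit good model this Frobenius acts as the scalar $(\sqrt{-2})^{nf}=\chi(\Frob_{K(E[3])})$; this is the computation underlying the definition of $\chi$, carried out as in \cite[Example 2.5]{Coppola2020WildField}. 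I expect this to be the main obstacle: one must pin down that the reduction over $K(E[3])$ is the twist of $y^2+y=x^3$ whose Frobenius eigenvalues coincide, rather than the trace-zero twist with eigenvalues $\pm\sqrt{-2^{\,nf}}$, which is forced by the full rationality of $E[3]$ over $K(E[3])$; and one must control all the signs and square roots (choosing $\Frob_K$, and using $\det\rho_{E,\ell}=\chi_{\mathrm{cyc}}$ as a consistency check) so that $\chi$, rather than $\overline\chi$, appears on each factor.

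Once $\varphi:=\rho_{E,\ell}\otimes\chi^{-1}$ is known to descend to a $2$-dimensional representation of $G$, I would read it off from the representation theory of $G$ together with the eigenvalue data of the first step. Since $\varphi|_{I}$ is faithful with eigenvalues $\pm i$, the trichotomy is immediate: for $G\cong C_4$ it splits as $\psi\oplus\overline{\psi}$ with $\psi(\sigma)=i$; for $G\cong Q_8$ or $D_4$ it is the unique faithful irreducible $2$-dimensional representation; and for $G\cong C_8$ it again splits into a character and its conjugate. The only remaining delicate point is this last case, where I must identify precisely which primitive eighth root of unity is $\psi(g)$ for $g=\mat{2}{2}{2}{1}$; this is settled by matching the eigenvalue of $\rho_{E,\ell}(g)\chi(g)^{-1}$, for the chosen Frobenius, against the value $\tfrac{-\sqrt2+\sqrt{-2}}{2}$, once more tracking the sign of $\sqrt{-2}$.
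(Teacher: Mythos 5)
Your overall route coincides with the paper's: twist by the unramified character $\chi$, show that $\rho_{E,\ell}\otimes\chi^{-1}$ factors through $G$, classify $G$ among $C_4$, $Q_8$, $D_4$, $C_8$ (you do this by hand inside $\GL_2(\F_3)$ where the paper invokes the Dokchitsers' classification, but the content is the same), and read off the representation from the character theory of $G$ together with the good model $y^2+y=x^3$ over $K(E[3])$. Cases (a) and (b) are handled exactly as in the paper and your argument there is fine; your observation that the reduction must be the twist with full rational $3$-torsion is a reasonable way to justify the scalar action of $\Frob_{K(E[3])}$.

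The gap is in case (c), at precisely the point you flag as ``delicate''. Knowing that $\varphi=\rho_{E,\ell}\otimes\chi^{-1}$ factors through $C_8=\langle g\rangle$, that $\varphi|_{I}$ is faithful with trivial determinant, and that $\det\rho_{E,\ell}$ is cyclotomic, narrows $\varphi$ to pairs $\{\psi,\overline\psi\}$ of primitive characters of $C_8$ but leaves a genuine binary ambiguity: it cannot distinguish $(\psi+\overline\psi)(g)=\sqrt{-2}$ from $(\psi+\overline\psi)(g)=-\sqrt{-2}$. Your proposed resolution --- ``matching the eigenvalue of $\rho_{E,\ell}(g)\chi(g)^{-1}$ against the value $\frac{-\sqrt{2}+\sqrt{-2}}{2}$'' --- is circular, because nothing in your argument computes the eigenvalues of $\rho_{E,\ell}(\Frob_K)$ itself. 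Since $\Frob_K$ does not fix $K(E[3])$ (the residue degree is $2$ here), there is no reduced curve over $k$ on which to count points; point counting only yields $\rho_{E,\ell}(\Frob_{K(E[3])})$, which is the scalar $(-2)^n$ and carries no information about the sign. The paper's missing ingredient is to specialize to $\ell=3$: then $\rho_{E,3}(\Frob_K)$ reduces modulo $3$ to the explicit matrix $g\in\GL_2(\F_3)$ acting on $E[3]$, whose trace and determinant modulo $3$ are known, and only one of the four candidate pairs of characters is compatible with them (the pair $\{\rho_6,\rho_8\}$ in the groupnames labelling). Without this mod-$3$ comparison, or some equivalent computation pinning down $\operatorname{tr}\rho_{E,\ell}(\Frob_K)$ and not merely its square, case (c) is not proved.
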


\begin{remark}
Determining which of cases \textit{(a)}, \textit{(b)} or \textit{(c)} occurs can be done, for instance, via \cite[\S 3, Proposition 2 and Lemma 3]{Dokchitser2008Root2}.
\end{remark}

\begin{proof}
By definition, $I = \rho_{E,\ell}(I_K)$ is the image of the absolute inertia subgroup via $\rho_{E,\ell}$. Then we know, by \cite[Theorem 2.10]{Coppola2021WildCurves} or \cite[Corollary 2 to Theorem 2]{Serre1968GoodVarieties}, that $I \cong \Gal(K^{nr}(E[3])/K^{nr})$, so $I$ is isomorphic to the inertia subgroup of $G$. Therefore, $I$ is a normal subgroup of $G$ with cyclic quotient. By the classification in \cite[\S 3, Proposition 2]{Dokchitser2008Root2}, it follows that there are only four possibilities for $G$ to have $I \cong C_4$, namely $G$ is one of $C_4,Q_8,D_4$ or $C_8$. In particular, in each of these cases $K$ contains a third root of the discriminant of $E$, and we have that $n$ is even if $G \cong C_4,Q_8$, while $n$ is odd if $G \cong D_4$ or $C_8$.

Suppose first that $G \cong I \cong C_4$. Then $K(E[3])$ is a quartic extension of $K$, generated by the coordinates of one point of $E$ of order $3$, and following the proof of \cite[Lemma 2.1]{Coppola2020WildAction} we obtain that there exists a model for the base change of $E$ to $K(E[3])$ that reduces to $y^2+y=x^3$ over the residue field $k$. In particular, if we fix $\Frob_K$ to be the arithmetic Frobenius that fixes $K(E[3])$ point-wise, we have that $\rho_{E,\ell}(\Frob_K)$ has eigenvalues $(\pm \sqrt{-2})^n$, and since $n$ is even this means that $\rho_{E,\ell}(\Frob_K)$ is the scalar matrix $(-2)^{n/2} \Id_2$. Therefore, $\rho_{E,\ell} \otimes \chi^{-1}$ factors through $G\cong I$, and as a representation of $I$ it is faithful with trivial determinant. By direct inspection on the character table of the group $C_4$ (see \cite{DokchitserGroupNames}), we deduce that $\rho_{E,\ell} \otimes \chi^{-1}$ is the direct sum of the two one-dimensional faithful representations of $C_4$, hence it is $\psi \oplus \overline{\psi}$ where $\psi$ is as in the statement.

Now suppose that $G \cong Q_8$ or $D_4$. Then $G$ is non-abelian, so by \cite[\S2, Lemma 1]{Dokchitser2008Root2} we have that $\rho_{E,\ell} \otimes \chi^{-1}$ factors through $G$, and as a representation of $G$ it is irreducible and faithful. Since both $Q_8$ and $D_4$ have only one $2$-dimensional irreducible representation (which is also faithful), case \textit{(b)} of the theorem holds.

Finally, we assume that $G \cong C_8$. In this case, since $G$ is cyclic, there exists a unique subextension of $K(E[3])$ of degree $2$ over $K$, namely the unramified extension $K_2$ generated by a primitive third root of unity in $\overline{K}$. Notice that, in particular, this means that $K(E[3])$ is not the compositum of a quartic totally ramified extension of $K$ with a quadratic unramified extension of $K$, because every extension of $K$ contains $K_2$, hence it is not totally ramified.

We have that $I \cong \Gal(K(E[3])/K_2)$, and the restriction $\rho_{E,\ell} \big|_{I_K}$ factors through $I$. By \cite[Figure 4.3]{Robson2017ConnectionsFields} we know that the $3$-division polynomial of $E$ over $K_2$ factors as the product of two quadratic factors, so there are two points $P,Q \in E[3] \setminus \{O\}$ such that the abscissas $x_P,x_Q$ are different but in the same $I$-orbit.

We fix the following generator $\sigma$ of $I$: it is the element that acts on $E[3]$ as
\begin{align*}
    \left\lbrace
    \begin{array}{lll}
         P &\mapsto &Q,  \\
         Q &\mapsto &-P;
    \end{array}
    \right.
\end{align*}
so if we fix $\{P,Q\}$ as a basis for $E[3]$ over $\F_3$ we identify $\sigma$ with the matrix
\begin{align*}
\mat{0}{2}{1}{0}.
\end{align*}

We now want to fix a generator $g$ of $G$, and to do so we fix one of the two elements of order $8$ in $G$ with square equal to $\sigma$, namely $g = \mat{2}{2}{2}{1}$ as in the statement. Let us consider the representation $\rho_{E,\ell} \otimes \chi^{-1}$. Again by the same proof as in \cite[Lemma 2.1]{Coppola2020WildAction}, we have that a model for the base change of $E$ to $K(E[3])$ reduces to $y^2+y=x^3$, thus the Frobenius element of $K(E[3])$, which has inertia degree $2$ over $K$, acts as the scalar matrix $(-2)^{n}\Id_2$. We therefore have that the arithmetic Frobenius of $K$ has distinct eigenvalues $\pm(\sqrt{-2})^n$, so $\rho_{E,\ell} \otimes \chi^{-1}$ factors through $G$. We fix $\Frob_K$ to be the Frobenius element of $K$ that is mapped to $g$ under the quotient map: $\Gal(\overline K/K) \rightarrow G$. Moreover, the restriction to inertia of $\rho_{E,\ell} \otimes \chi^{-1}$ factors through $I \cong C_4$, and as a representation of $C_4$ it is faithful with trivial determinant. Therefore $\rho_{E,\ell}\otimes \chi^{-1} = \psi_a \oplus \psi_b$, where $\psi_a$ and $\psi_b$ are two of the four one-dimensional representations of $C_8$, which are listed in \cite{DokchitserGroupNames}. Namely, the possible representations are denoted in op. cit. by $\rho_3, \rho_5, \rho_6, \rho_8$, and we identify $g$ with the conjugacy class denoted by 8A, and the eighth root of unity $\zeta_8$ with the complex number $e^{2 \pi i /8}= \dfrac{\sqrt{2}+\sqrt{-2}}{2}$. Using that the restriction to inertia has trivial determinant, we deduce that the only possibilities for the set $\{\psi_a,\psi_b\}$ are:
\begin{align*}
    \{\rho_3,\rho_5\}, \ \{\rho_3,\rho_8\}, \ \{\rho_5,\rho_6\}, \ \{\rho_6,\rho_8\},
\end{align*}
and in particular $\psi_b = \overline {\psi_a}$. Let $\psi= \psi_a$. Now we have
\begin{align*}
    \rho_{E,\ell} (\Frob_K) = \chi(\Frob_K) (\psi + \overline{\psi})(g).
\end{align*}

Let us fix $\ell = 3$. Then, the reduction modulo $3$ of $\rho_{E,\ell}$ is equal to the modulo $3$ Galois representation, i.e. the one given by the action of $\Gal(\overline K/K)$ on $E[3]$, so we have that $\rho_{E,\ell}(\Frob_K)$ reduces to $g=\mat{2}{2}{1}{2} \pmod 3$, which has trace $1$ and determinant $2$. A direct computation shows that the only pair in the list above for which this occurs is $\{\rho_6,\rho_8\}$, so the representation $\rho_{E,\ell}$ is given by
\begin{align*}
    \rho_{E,\ell} = \chi \otimes (\rho_6 \oplus \rho_8),
\end{align*}
and this concludes the proof, since $\rho_6(g)=\zeta_8^3=\dfrac{-\sqrt{2}+\sqrt{-2}}{2}$.
\end{proof}

\section*{Acknowledgements}
This work was carried out at the University of Bristol between 2018 and 2021 under the EPSRC studentship No 1961436. The author is currently a member of the INdAM group GNSAGA.

\bibliographystyle{alpha}
\bibliography{references}
\end{document}